\documentclass[12pt,reqno]{amsart}
\usepackage[T1]{fontenc}
\usepackage{lmodern}
\usepackage{amsmath,amssymb,amsthm,mathtools,enumitem,array}
\usepackage{microtype}
\usepackage[margin=1.0in]{geometry}

\usepackage[table]{xcolor}
\usepackage{booktabs}
\usepackage[colorlinks=true,linkcolor=teal!85!black,citecolor=teal!85!black,urlcolor=teal!85!black]{hyperref}
\usepackage{xcolor}
\usepackage[alphabetic]{amsrefs}
\allowdisplaybreaks
\mathtoolsset{showonlyrefs,showmanualtags} 

\newtheorem{theorem}{Theorem}[section]
\newtheorem{proposition}[theorem]{Proposition}
\newtheorem{lemma}[theorem]{Lemma}

\theoremstyle{definition}
\newtheorem{definition}[theorem]{Definition}
\newtheorem{question}[theorem]{Question}
\theoremstyle{remark}
\newtheorem{remark}[theorem]{Remark}

\numberwithin{equation}{section}
\newcommand{\T}{\mathbb T}
\newcommand{\N}{\mathbb N}
\newcommand{\Z}{\mathbb Z}
\newcommand{\R}{\mathbb R}

\newcommand{\bs}{\boldsymbol}

\makeatletter
\def\l@section{\@tocline{1}{0pt}{0pt}{2pc}{\bfseries}}
\def\l@subsection{\@tocline{2}{0pt}{2pc}{0pt}{}}
\makeatother

\title[Exponential decay of threshold correlations]{Exponential decay of threshold correlations for symmetric periodic functions}
\author[Giannitsi]{Christina Giannitsi}
    \address{Department of Mathematics, Virginia Tech, Blacksburg, VA, USA}
    \email {cgiannitsi@vt.edu}
\author[Madrid]{Jose Madrid}
    \address{Department of Mathematics, Virginia Tech, Blacksburg, VA, USA}
    \email {josemadrid@vt.edu}
    
\subjclass[2020]{42A05, 52C17}
\keywords{Threshold correlation, symmetric periodic function, torus, cyclic covering, cube packing}

\begin{document}

\begin{abstract}
We study how often $n$ distinct integer dilates of a periodic function can all lie above a fixed threshold or all lie below its negative. We consider continuous functions $f:\R\to[-1,1]$ that are $1$-periodic, even, strictly decreasing on $[0,1/2]$, and satisfy $f(0)=1$ and $f(t+1/2)=-f(t)$. This class includes the cosine and the triangle wave. For $0\leq\alpha<1$, let $\rho_\alpha\in(0,1/4]$ be determined by $f(\rho_\alpha)=\alpha$, and let $F_\alpha(n)$ denote the infimum of these proportions over all sets of $n$ distinct positive integer frequencies.

We prove that $F_\alpha(n)\geq 4\rho_\alpha^n$ and that the limit $\lim_{n\to\infty}F_\alpha(n)^{1/n}$ exists for every threshold. When $\rho_\alpha^{-1}$ is an integer, we obtain
\begin{align}
F_\alpha(n)
&\leq C_\alpha n^{\log_2((1-\rho_\alpha)/(1-2\rho_\alpha))}\rho_\alpha^n,
\end{align}
and hence the limit equals $\rho_\alpha$. Otherwise, the limit belongs to the interval $[\rho_\alpha,\lfloor\rho_\alpha^{-1}\rfloor^{-1}]$.

The upper bound in the reciprocal case uses a cyclic covering construction previously studied in the context of circular graphs. We give a self-contained proof of the corresponding optimal modulus.

\end{abstract}

\maketitle
\tableofcontents

\section{Introduction}
Let $A$ be a finite set of positive integers. We are interested in the proportion of points $t\in\T$ for which the functions $\cos(2\pi at)$, $a\in A$, are either all positive or all negative. We ask how small this proportion can be when the number of frequencies is fixed. Although the cosines are all positive in a neighborhood of the origin, the size of this neighborhood depends on the largest frequency and does not give a bound in terms of $|A|$ alone.

Sign correlations of this kind arose in the study of eigenfunctions of differential operators. Gon\c{c}alves, Oliveira e Silva, and Steinerberger studied a universality phenomenon for such correlations, obtaining the sharp two-frequency value $1/3$ in the cosine model \cite{GOS}. Dou, Goh, Liu, Legate, and Pettigrew subsequently proved that the sharp value for three frequencies is $1/9$ and characterized the cases of equality \cite{DGLLP}. They also observed that the powers of three, for which the common-sign proportion is $3^{1-n}$, are not always optimal. Indeed, the set $\{1,3,11,33\}$ gives a proportion of $1/33$, smaller than the value $1/27$ obtained from $\{1,3,9,27\}$. They conjectured that $1/33$ is optimal for four frequencies. 
This conjecture was recently proved by Zang, Zhao, Zhao, and Zhou, who also characterized all cases of equality \cite{ZZZZ2026}.

In this paper we study the asymptotic behavior of these correlations as the number of frequencies increases. In particular, we show that the infimum of the common-sign proportions for $n$ cosine frequencies is $4^{-n+o(n)}$. We also consider a threshold version of the problem for a class of symmetric periodic functions that includes the cosine and the triangle wave. We prove a universal lower bound and show that the exponential decay rate exists for every threshold. For an infinite family of thresholds we determine this rate exactly, while for the remaining thresholds we obtain upper and lower bounds.

\subsection{Main Results}

Let $\T = \R/\Z$, and for $t \in \R$, write $\|t\|_{\T} := \min_{k \in \Z} |t - k|$. We denote by $\tau : \T \to [-1,1]$ the triangle wave defined by $\tau(t) := 1 - 4\|t\|_{\T}.$
\begin{definition}\label{def:antipodal-wave}
Let $\mathcal{W}$ denote the class of functions $f:\R\to[-1,1]$ of the form
\begin{align}
f &= \phi\circ\tau,
\end{align}
where $\phi:[-1,1]\to[-1,1]$ is continuous, strictly increasing, odd, and satisfies $\phi(1)=1$. We refer to the functions in $\mathcal{W}$ as \emph{antipodal waves}.
\end{definition}
The condition $\phi(1)=1$ is only a normalization; more generally, one may allow any positive value of $\phi(1)$ and rescale the resulting function accordingly.

We now formulate the threshold-correlation problem for a fixed antipodal wave. Throughout, $\N=\{1,2,\ldots\}$, and $m$ denotes normalized Lebesgue measure on $\T$. Fix $f\in\mathcal{W}$. For $0\leq\alpha<1$, let $\rho_\alpha\in(0,1/4]$ be the unique number such that $f(\rho_\alpha) = \alpha.$ For a finite nonempty set $A\subset\N$, define
\begin{align}
S_\alpha(A)
&:= \{t\in\T:
f(at)>\alpha \text{ for every }a\in A,
\text{ or }
f(at)<-\alpha \text{ for every }a\in A\}.
\end{align}
For every integer $n\geq1$, let $P_\alpha (A) = m\big( S_\alpha(A)\big)$ and define
\begin{align}
F_\alpha(n) &:= \inf\{ P_\alpha(A)  :A\subset\N,\ |A|=n\}.
\end{align}

\begin{theorem}\label{thm:main}
Let $0 \leq \alpha < 1$. Then, for every integer $n \geq 1$,
\begin{align}
F_\alpha(n)
&\geq 4 \rho_\alpha^n.
\label{eq:main-lower}
\end{align}
\begin{enumerate}[leftmargin=20pt, label=\textnormal{(\roman*)}]
\item \label{pt:main-integer}
If $\rho_\alpha^{-1}\in\N$, equivalently if $\alpha = f(1/m)$ for some integer $m \geq 4$, then there is a constant $C_\alpha > 0$ such that
\begin{align}
F_\alpha(n) 
    & \leq C_\alpha 
    n^{\log_2((1-\rho_\alpha)/(1-2\rho_\alpha))}
    \rho_\alpha^n
\end{align}
for every integer $n \geq 1$. In particular,
\begin{align}
\lim_{n\to\infty}F_\alpha(n)^{1/n} &= \rho_\alpha.
\end{align}
\item \label{pt:main-noninteger}
If $\rho_\alpha^{-1}\notin\N$, then there is a constant $C_\alpha > 0$ such that
\begin{align}
F_\alpha(n) &\leq C_\alpha
    \left\lfloor \rho_\alpha^{-1}\right\rfloor^{-n}
\end{align}
for every integer $n \geq 1$. Consequently, the following limit exists and satisfies
\begin{align}
\rho_\alpha & \leq \lim_{n\to\infty}F_\alpha(n)^{1/n}
    \leq \frac{1}{\left\lfloor \rho_\alpha^{-1}\right\rfloor}.
\end{align}
\end{enumerate}
\end{theorem}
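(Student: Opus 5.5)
The plan has three parts: the lower bound via a sumset inequality on $\T$, the upper bounds via explicit frequency sets, and existence of the limit via Fekete's lemma.

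\textbf{Reduction.} Since $f=\phi\circ\tau$ with $\phi$ odd and strictly increasing, we have $f(s)>\alpha$ iff $\|s\|_{\T}<\rho_\alpha$, and $f(s)<-\alpha$ iff $\|s-\tfrac12\|_{\T}<\rho_\alpha$. Write $\rho=\rho_\alpha$. Then $S_\alpha(A)=D\sqcup D'$, where
\begin{align}
D=\{t:\|at\|_{\T}<\rho\ \forall a\in A\}, \qquad D'=\{t:\|at-\tfrac12\|_{\T}<\rho\ \forall a\in A\}.
\end{align}
These are disjoint because $\rho\le 1/4$. In particular $P_\alpha(A)$ depends only on $\rho$ and $A$, so every bound reduces to a statement about the map $\Phi(t)=(a_1t,\dots,a_nt)\in\T^n$.

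\textbf{Lower bound $4\rho^n$.} For $y\in\T^n$ let $E_y=\Phi^{-1}(y+[0,\rho)^n)$, a finite union of intervals, and let $h=(\tfrac12,\dots,\tfrac12)$.
\begin{itemize}[leftmargin=20pt]
\item Containments: $E_y-E_y\subseteq D$ and $E_{y+h}-E_y\subseteq D'$. Indeed, differences of two points of $y+[0,\rho)^n$ lie in $(-\rho,\rho)^n$.
\item Averaging: by Fubini, $\int_{\T^n} m(E_y)\,dy=\rho^n$ (the pushforward of $m$ under $\Phi$, averaged over translates, is Lebesgue measure). Since $y\mapsto y+h$ preserves Lebesgue measure, some $y$ satisfies $m(E_y)+m(E_{y+h})\ge 2\rho^n$.
\item Sumsets: apply the Raikov--Macbeath (Kneser) inequality $m(X-Y)\ge\min(1,m(X)+m(Y))$ on $\T$. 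This gives $m(D)\ge 2\max(m(E_y),m(E_{y+h}))$ and $m(D')\ge m(E_y)+m(E_{y+h})$.
\item Conclusion: summing, $m(D)+m(D')\ge 4\rho^n$. The truncation at $1$ is harmless, since if it is active then $m(D)\ge 1/2\ge 4\rho^n$ already.
\end{itemize}

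\textbf{Upper bounds.} In case \ref{pt:main-noninteger}, put $k=\lfloor\rho^{-1}\rfloor$, so that $\rho<1/k$ strictly, and take $A=\{1,k,\dots,k^{n-1}\}$.
\begin{itemize}[leftmargin=20pt]
\item Writing $t$ in base $k$, the constraint $\|k^jt\|_{\T}<\rho<1/k$ forces the $(j+1)$-st digit to be $0$ or $k-1$, in a way coherent with the carry. The strict gap $\rho<1/k$ should make the carry forced: once one has committed to "near $0$ from above" or "near $1$ from below", the next digit is determined, with at most a bounded number of exceptional branches.
\item Then $D$ is covered by $O(1)$ intervals of length $O(k^{-n})$. $D'$ is handled in the same way, with the digit pattern dictated by $\tfrac12$ in base $k$; if $k$ is even, $D'$ may even be empty for $n\ge2$.
\end{itemize}
Case \ref{pt:main-integer}, where $k=\rho^{-1}$, is the hard step. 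Powers of $k$ alone lose a factor of roughly $2^n$, because both digits $0$ and $k-1$ become admissible at every level. I would instead use the cyclic covering construction mentioned in the abstract:
\begin{itemize}[leftmargin=20pt]
\item Choose a modulus $N$ and frequencies $A\subset\Z$ whose reductions modulo $N$ arise from an optimal cyclic cover. The target is that the $\rho$-neighbourhoods of $0$ and $\tfrac12$ pulled back by $\Phi$ meet only $O(1)$ residue windows of size $\asymp 1/N$, with $N\asymp n^{-c}\rho^{-n}$ and $c=\log_2\frac{1-\rho}{1-2\rho}$.
\item To reach this $N$, build $A$ recursively by a doubling step $A\mapsto A\cup M A$ with a suitable multiplier $M$. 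Each doubling should multiply the modulus by a factor like $\rho^{-|A|}\cdot\frac{1-2\rho}{1-\rho}$; iterating $\log_2 n$ times produces the polynomial factor $n^{c}$.
\item The main work, and what I expect to be the principal obstacle, is proving optimality of the modulus in this cover and controlling the boundary windows uniformly in $n$.
\end{itemize}

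\textbf{Existence of the limit.} I would prove an approximate submultiplicativity
\begin{align}
F_\alpha(n+k)\le C\,F_\alpha(n)F_\alpha(k).
\end{align}
To do this, take near-optimal sets $A$ and $B$ and use $A\cup MB$ with $M$ huge and chosen so that $MB$ is disjoint from $A$. By Weyl equidistribution of $Mt$ relative to the slowly varying constraints from $A$, the measure of $S_\alpha(A\cup MB)$ tends to a product-type expression that is bounded by $C\,P_\alpha(A)P_\alpha(B)$. Fekete's lemma applied to $\log(CF_\alpha(n))$ then gives existence of $\lim F_\alpha(n)^{1/n}$. Its value lies in $[\rho,\cdot]$ by the lower bound, and the upper bounds pin it to $\rho$ in case \ref{pt:main-integer} and to at most $1/\lfloor\rho^{-1}\rfloor$ in case \ref{pt:main-noninteger}.
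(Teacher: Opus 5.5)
\textbf{What works.} Your lower bound is correct and differs from the paper's only cosmetically. You keep the two antipodal boxes separate and apply the two-set Raikov--Macbeath inequality to $E_{y+h}-E_y$. The paper instead takes $E$ to be the preimage of the union of both boxes and uses $m(E-E)\ge 2m(E)$ once. Your Fekete step is the paper's, and in fact works with $C=1$, because the limit of $P_\alpha(A\cup KB)$ is $m(S_\alpha^+(A))m(S_\alpha^+(B))+m(S_\alpha^-(A))m(S_\alpha^-(B))$.

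\textbf{Case (ii) fails.} The set $\{1,k,\dots,k^{n-1}\}$ does not give $O(k^{-n})$, and the heuristic that $\rho<1/k$ forces the carry is false. Since $(k+1)\rho>1$, one always has $\epsilon:=1-k\rho<\rho$, so the branch $t\in(\frac{1-\rho}{k},\rho)$ survives at every level. Concretely, let $G_n=\{t:\|k^jt\|_{\T}<\rho,\ 0\le j<n\}$ be your (symmetric) set $D$ for this $A$. Then
\[
m(G_n)=\tfrac1k\bigl(m(G_{n-1})+2\,m(G_{n-1}\cap(\epsilon,\rho))\bigr).
\]
When $\rho\le k/(k^2+1)$ one gets $m(G_n\cap(\epsilon,\rho))=((k+1)\rho-1)k^{1-n}$, so $k^nm(G_n)$ grows linearly in $n$. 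For example, with $k=4$ and $\rho=0.21$ one has $k^nm(G_n)=1.68+0.4(n-1)$.

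For $\rho$ near $1/k$ it is worse. Take $k=4$ and $\rho=0.249$. The branches $(-\rho,-\frac{1-\rho}{k})\to(\epsilon,\rho)$, $(\frac{1-\rho}{k},\rho)\to(-\rho,-\epsilon)$ and $(\epsilon,\frac{\rho}{k})\to(k\epsilon,\rho)$ of $t\mapsto kt$ each cover the next interval. Their transition graph has two loops through the first interval, of lengths $2$ and $3$. Hence $m(G_n)\gtrsim(\lambda/k)^n$ with $\lambda^3=\lambda+1$, and you do not even get the rate $1/\lfloor\rho^{-1}\rfloor$.

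The missing idea is to pin the \emph{top} block first to a window of radius at most $1-k\rho$. The paper seeds with $B_0=\{1,\dots,N\}$, $N\ge\rho/(1-k\rho)$. Its positive set is $\{\|t\|_{\T}<\rho/N\}$, and its negative set is empty because $\{1,2,3\}\subset B_0$. It then iterates $B\mapsto\{1\}\cup kB$. The frequency $1$ excludes every component $(i+u)/k$ with $i\not\equiv0\pmod k$, because $1/k-\rho/(k\max B)\ge\rho$. So each added frequency costs exactly a factor $1/k$.

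\textbf{Case (i) is not constructed.} The shape $A\mapsto A\cup MA$ you propose would require $A$ itself to be a strict cyclic cover modulo $M$, and you give no way to arrange that. What is needed is a \emph{fixed} cover. With $\ell=\rho^{-1}$, $q_0=1$ and $q_d=\ell q_{d-1}-1$, the multipliers $D_d=\{q_{d-1},1,\ell,\dots,\ell^{d-2}\}$ satisfy $\max_{b\in D_d}\|bc/q_d\|_{\T}>\rho$ for $1\le c<q_d$. The paper proves this by passing to $r\equiv\ell^{-1}c\pmod{q_d}$. If the cover failed, the first $d$ base-$\ell$ digits of $r/q_d$ would lie in $\{0,\ell-1\}$, so $(\ell-1)\mid\lfloor\ell^dr/q_d\rfloor$. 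This contradicts the explicit value of that floor obtained from $(\ell-1)q_d=(\ell-2)\ell^d+1$.

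The recursive step is then $B\mapsto D_d\cup q_dB$, with $d$ maximal subject to $q_{d-1}\le\max B$. It is seeded by $\{1,2,3\}$ so that the negative set stays empty. Because $q_d\equiv-1\pmod\ell$, each $bc/q_d$ outside the window lies at distance at least $\rho/q_d$ from its boundary. This absorbs the error $bu/q_d$, since $|u|<\rho/\max B$ and $b\le\max B$. Each step loses a factor $\ell^d/q_d<(\ell-1)/(\ell-2)$, while $\max B$ more than squares. So there are $O(\log n)$ steps, which gives the factor $n^{\log_2((\ell-1)/(\ell-2))}$. The optimality of $q_d$, which you flag as the principal obstacle, is not used in the upper bound at all.
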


The reciprocal upper bound uses a cyclic covering construction closely related to earlier work on the Shannon capacity of circular graphs. The corresponding optimal modulus and cyclic separation construction were obtained by Polak and Schrijver \cite{Polak2019}*{Theorem 9.3.1, 9.3.3} and later recovered in the more general framework of Buys, Polak, and Zuiddam \cite{BPZ2025}. We state the result in the form needed here and include a self-contained proof.

\begin{theorem}\label{thm:main-cover}
Let $0 \leq \alpha < 1$ and suppose that $\rho_\alpha^{-1}\in\N$. For every integer $d \geq 0$, define
\begin{align}
q_d &= \frac{(1 - 2\rho_\alpha)\rho_\alpha^{-d} 
    + \rho_\alpha}
    {1 - \rho_\alpha}.
\end{align}
Also, let $D_1 = \{1\}$, and, for $d \geq 2$, 
\begin{align}
D_d &= \{q_{d-1}\} \cup \{\rho_\alpha^{-j}:0 \leq j \leq d - 2\}.
\end{align}
Then, $D_d$ consists of $d$ distinct positive integers, and for every integer $k$ with $1 \leq k < q_d$, there exists $b\in D_d$ such that
\begin{align}
\left\|\frac{bk}{q_d}\right\|_{\T} & > \rho_\alpha.
\end{align}
Moreover, $q_d$ is the largest modulus for which such a strict cyclic cover with $d$ multipliers is possible. More precisely, if $q \geq 2$ is an integer and $b_1,\ldots,b_d\in\Z$ satisfy
\begin{align}
\max_{1 \leq j \leq d}
\left\|\frac{k b_j}{q}\right\|_{\T}
&> \rho_\alpha
\qquad\text{for every }1 \leq k < q,
\end{align}
then $q \leq q_d$.
\end{theorem}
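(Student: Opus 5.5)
The plan has three parts: reduce the construction to a statement about powers of a single integer, prove that statement by a digit-expansion argument, and prove optimality by induction on $d$. Write $r=\rho_\alpha^{-1}\in\N$, so $r\geq 4$. Clearing denominators gives
\begin{align}
q_d=\frac{(r-2)r^d+1}{r-1}.
\end{align}
This is an integer because $(r-2)r^d+1\equiv r-2+1=r-1\equiv 0 \pmod{r-1}$. It also satisfies $q_1=r-1$ and the recursion $q_d=rq_{d-1}-1$. So $q_d$ is strictly increasing and $q_{d-1}>r^{d-2}$. This shows that $D_d$ consists of $d$ distinct positive integers.

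The case $d=1$ is immediate. For $1\le k\le r-2$ we have
\begin{align}
\|k/(r-1)\|_{\T}\ge 1/(r-1)>1/r.
\end{align}
The key observation for $d\ge 2$ is that $rq_{d-1}\equiv 1\pmod{q_d}$, so $q_{d-1}$ acts as $r^{-1}$ modulo $q_d$. Substituting $k=rk'$ is a bijection of $(\Z/q_d\Z)\setminus\{0\}$. It turns the multiplier set $D_d$ into $\{1,r,\dots,r^{d-1}\}$ acting on $y=k'/q_d$. The existence part of Theorem~\ref{thm:main-cover} is therefore equivalent to the following claim: no nonzero $y\in q_d^{-1}\Z/\Z$ satisfies $\|r^jy\|_{\T}\le 1/r$ for all $0\le j\le d-1$.

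To prove the claim, I would represent $y=a/q_d$ with $|a|\le q_d/r$ and track the sequence $a_j\equiv r^ja\pmod{q_d}$ with $|a_j|\le q_d/r$. Since $r\cdot(q_d/r)=q_d$, each step has the form
\begin{align}
ra_j=a_{j+1}+\ell_jq_d, \qquad \ell_j\in\{-1,0,1\}.
\end{align}
Unwinding gives an expansion
\begin{align}
a\,r^{d-1}=a_{d-1}+q_d\sum_{j}\ell_j r^{d-2-j}.
\end{align}
Combining this with $|a_{d-1}|\le q_d/r$ should force $a=0$. The point is that the extremal value of $\sum_j \ell_j r^{d-2-j}$ produces exactly the boundary case $|a|=q_d/r$. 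That case is excluded because $q_d\equiv -1\pmod r$, so $q_d/r\notin\Z$. I expect this bookkeeping to be routine, and it also explains where the formula for $q_d$ comes from.

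For optimality, I would argue by induction on $d$, with the base case $q\le r-1=q_1$. If $q\ge r$, then $k=1$ has $\|b/q\|_{\T}\le 1/r$ for every multiplier $b$ only if $b$ is small, so the base case needs a short check. For the inductive step, consider the set
\begin{align}
K=\{k\in\Z/q\Z:\|kb_d/q\|_{\T}\le\rho_\alpha\}.
\end{align}
The remaining $d-1$ multipliers must cover $K\setminus\{0\}$. The set $K$ is a union of cosets of the subgroup $\ker(b_d)$, indexed by an interval of length about $2q'/r$ in $\Z/q'\Z$, where $q'=q/\gcd(q,b_d)$. A plain volume (cube-packing) bound only gives $q\le r^d$, so the inductive hypothesis must be strengthened. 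The form I would try is this: $d-1$ multipliers cannot strictly cover all nonzero points of a ``window'' $\{k:\|kc/q\|\le 1/r\}$ of cardinality exceeding $rq_{d-1}-1$. Equivalently, I would prove a statement for general sets of the form $\{k: |k|\le L\}$ in a cyclic group, where the extremal $L$ obeys the same recursion $L\mapsto rL-1$. Formulating this strengthened inductive statement correctly, so that it both closes under the induction and gives exactly $q\le q_d$, is the main obstacle. If it resists, I would fall back on a Polak--Schrijver-type argument. That argument embeds the points $(kb_j/q)_j$ into $\T^d$, packs cubes of side $1/r$, and gains the deficit $r^d-q_d$ from the forced alignment of these cubes along one coordinate, which is available because $r\in\N$.
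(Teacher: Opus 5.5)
Your preliminary steps are correct and agree with the paper: integrality of $q_d$, the recursion $q_d=rq_{d-1}-1$, distinctness of $D_d$, the case $d=1$, and the substitution $k=rk'$ via $rq_{d-1}\equiv 1\pmod{q_d}$, which reduces the covering claim to the multipliers $1,r,\dots,r^{d-1}$.

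\textbf{Covering.} The gap is the step you call routine bookkeeping. The unwound identity $ar^{d-1}=a_{d-1}+q_d\sum_j\ell_j r^{d-2-j}$ together with $|a_{d-1}|\le q_d/r$ does \emph{not} force $a=0$. Take $r=4$, $d=3$, $q_3=43$, $a=3$ and $(\ell_0,\ell_1)=(0,1)$. Then $48=5+43$ with $|a_2|=5<43/4$. This $a$ is excluded only by the intermediate condition at $j=1$, since $\|12/43\|_{\T}>1/4$. So every intermediate bound has to enter. A comparison with the extremal value of $\sum_j\ell_jr^{d-2-j}$ can only exclude $a$ near $\pm q_d/r$. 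It says nothing about a nonzero $a$ whose orbit stays in the allowed windows along a non-extremal digit pattern, and excluding those is the whole content of the claim.

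What is missing is an arithmetic use of the exact value of $q_d$. In the paper, for $1\le a<q_d$ the hypotheses say that the first $d$ base-$r$ digits of $a/q_d\in[0,1)$ lie in $\{0,r-1\}$. The endpoints $1/r$ and $1-1/r$ are impossible because $q_d\equiv-1\pmod r$. Hence $r-1\mid\lfloor r^da/q_d\rfloor$. On the other hand, $(r-1)q_d=(r-2)r^d+1$ gives $r^da/q_d=\frac{(r-1)a}{r-2}-\frac{a}{(r-2)q_d}$. Writing $a=(r-2)u+s$ with $0\le s\le r-3$, the floor is $(r-1)u-1$ or $(r-1)u+s$, and neither is divisible by $r-1$. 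An equivalent form: if $\lfloor r^da/q_d\rfloor=(r-1)N$ with $0\le N\le (r^d-1)/(r-1)$ and fractional part $\theta$, then $r^d\bigl(a-(r-2)N\bigr)=N+\theta q_d\in[0,r^d)$, which forces $a=0$.

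\textbf{Optimality.} By your own account the inductive route is not yet formulated. Its motivating premise is also where the key idea is lost: a cube-packing argument gives exactly $q\le q_d$, with no strengthened induction.

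The points $\bs z_k=(kb_1/q,\dots,kb_d/q)$, $0\le k<q$, are pairwise separated by more than $\rho_\alpha$ in the toroidal sup-norm. Since there are finitely many, the minimal separation $s$ satisfies $\rho_\alpha<s\le 1/2$ \emph{strictly}. So cubes of side $s$, not $1/r$, centered at these points have disjoint interiors. Slicing $\T^j$ perpendicular to one coordinate and integrating gives $sM_j\le M_{j-1}$ for the maximal packing numbers. Hence $M_j<rM_{j-1}$, and integrality gives $M_j\le rM_{j-1}-1$. Starting from $M_0=1$, this is exactly the recursion $q_j=rq_{j-1}-1$, so $q\le M_d\le q_d$.

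The deficit $r^d-q_d=1+r+\dots+r^{d-1}$ comes from losing one unit at each of the $d$ slicing levels, not from an alignment along a single coordinate. With cubes of side exactly $1/r$, as in your fallback, there is no strict inequality to round, and you are stuck at $r^d$.
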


Since $\tau$ maps $[0,1/2]$ bijectively onto $[-1,1]$, the representation $f = \phi\circ\tau$ determines $\phi$ uniquely through 
\begin{align}
\phi(s) &= f\left(\frac{1-s}{4}\right),
\qquad -1\leq s\leq1.
\end{align}

\begin{proposition}\label{prop:wave-characterization}
Let $f:\R\to[-1,1]$. $f\in\mathcal{W}$ if and only if the following properties hold:
\begin{enumerate}[label=\textnormal{\textbf{\roman*.}}, leftmargin=*]
\item $f$ is $1$-periodic.
\item $f$ is even.
\item For every $t\in\R$, $\displaystyle f\left(t+\frac12\right) = -f(t).$
\item The restriction of $f$ to $[0,1/2]$ is strictly decreasing.
\item $f(0)=1$.
\item $f$ is continuous.
\end{enumerate}
Moreover, the representation $f=\phi\circ\tau$ is unique.
\end{proposition}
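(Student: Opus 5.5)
The plan is to prove the two directions separately, using only the explicit formula $\tau(t)=1-4\|t\|_{\T}$ and the identity $\phi(s)=f((1-s)/4)$ stated just before the proposition.

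\emph{Forward direction.} Suppose $f=\phi\circ\tau$ with $\phi$ as in Definition \ref{def:antipodal-wave}. I would verify each of the six properties from the corresponding property of $\tau$:
\begin{itemize}[leftmargin=20pt]
\item $\|\cdot\|_{\T}$ is $1$-periodic, even and continuous, so $\tau$ is too; composing with the continuous $\phi$ gives properties i, ii and vi.
\item $f(0)=\phi(\tau(0))=\phi(1)=1$, which is property v.
\item For $t\in[0,1/2]$ we have $\|t\|_{\T}=t$, so $\tau(t)=1-4t$ is strictly decreasing there. Since $\phi$ is strictly increasing, $f$ is strictly decreasing on $[0,1/2]$, which is property iv.
\item For property iii, the key identity is $\tau(t+1/2)=-\tau(t)$, which is equivalent to
\begin{align}
\|t+1/2\|_{\T}=\tfrac12-\|t\|_{\T}.
\end{align}
By periodicity and evenness it suffices to check this for $t\in[0,1/2]$, where both sides equal $1/2-t$. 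Oddness of $\phi$ then gives $f(t+1/2)=\phi(-\tau(t))=-f(t)$.
\end{itemize}

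\emph{Converse.} Suppose $f$ satisfies i--vi. I would define $\phi(s):=f((1-s)/4)$ for $s\in[-1,1]$; here $(1-s)/4$ ranges over $[0,1/2]$. The required properties of $\phi$ follow as below.
\begin{itemize}[leftmargin=20pt]
\item $\phi$ is continuous by vi.
\item $\phi$ is strictly increasing by iv, since $s\mapsto(1-s)/4$ is decreasing.
\item $\phi(1)=f(0)=1$ by v.
\item For oddness, note that $(1-(-s))/4=(1-s)/4+1/2-s/2\cdot 0$ is not of the right shape, so I compute directly: $(1+s)/4=\tfrac12-(1-s)/4$. Then
\begin{align}
\phi(-s)=f\bigl(\tfrac12-u\bigr)=f\bigl(-u+\tfrac12\bigr)=-f(-u)=-f(u)=-\phi(s),
\end{align}
where $u=(1-s)/4$, using iii and then ii.
\end{itemize}

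It remains to show $f=\phi\circ\tau$ on all of $\R$. Both sides are $1$-periodic and even: the left side by i and ii, the right side by the forward argument applied to $\tau$. So it suffices to check equality on $[0,1/2]$. There $\tau(t)=1-4t$, so $\phi(\tau(t))=f\bigl((1-(1-4t))/4\bigr)=f(t)$. Hence $f\in\mathcal W$.

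\emph{Uniqueness.} If $f=\phi_1\circ\tau=\phi_2\circ\tau$, then since $\tau$ maps $[0,1/2]$ onto $[-1,1]$, we get $\phi_1=\phi_2$ on $[-1,1]$.

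No step is a real obstacle. The only point needing care is the half-period identity for $\|\cdot\|_{\T}$, and its reduction to $t\in[0,1/2]$ via periodicity and evenness.
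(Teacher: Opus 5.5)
Your proof is correct and matches what the paper intends: the paper writes out no separate argument, only the observation that $\tau$ maps $[0,1/2]$ bijectively onto $[-1,1]$, so that $\phi(s)=f((1-s)/4)$. That formula is exactly how you define $\phi$ in the converse and what your uniqueness argument rests on, and the remaining routine checks, including the half-period identity $\|t+1/2\|_{\T}=\tfrac12-\|t\|_{\T}$, are left implicit in the paper. One cosmetic fix: delete the stray aside in the oddness step, since $(1+s)/4$ equals $(1-s)/4+s/2$, not $(1-s)/4+\tfrac12$. The direct computation via $(1+s)/4=\tfrac12-(1-s)/4$ that follows it is the right one.
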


The class $\mathcal{W}$ contains a variety of familiar periodic profiles. Table \ref{tab:antipodal-wave-examples} lists several examples together with the corresponding change of scale $\phi$ and threshold radius $\rho_\alpha$.

\begin{table}[ht] \small 
\centering
\renewcommand{\arraystretch}{1.8}
\begin{tabular}{
|>{\raggedright\arraybackslash}m{0.29\textwidth}
|>{\centering\arraybackslash}m{0.12\textwidth}
|>{\centering\arraybackslash}m{0.18\textwidth}
|>{\centering\arraybackslash}m{0.32\textwidth}|
} \hline \rowcolor{teal!15}
\textbf{Example} & $\bs{\phi(s)}$ & $\bs{f(t)=\phi(\tau(t))}$ & $\bs{\rho_\alpha}$ \\\hline\hline
Triangle wave
    &$s$
    &$1-4\|t\|_{\T}$
    &$\displaystyle \frac{1-\alpha}{4}$\\
Cosine wave
    &$\displaystyle \sin\left(\frac{\pi s}{2}\right)$
    &$\cos(2\pi t)$
    &$\displaystyle \frac{\arccos(\alpha)}{2\pi}$\\
Cubic smoothstep wave
    &$\displaystyle \frac{3s-s^3}{2}$
    &$\displaystyle \frac{3\tau(t)-\tau(t)^3}{2}$
    &$\displaystyle \frac{1-\phi^{-1}(\alpha)}{4}$\\
Hyperbolic-tangent wave
    &$\displaystyle \frac{\tanh(cs)}{\tanh(c)}$
    &$\displaystyle
\frac{\tanh(c\tau(t))}{\tanh(c)}$
    &$\displaystyle \frac14- \frac{\operatorname{arctanh}(\alpha\tanh(c))}{4c} $\\[0.2cm]\hline
\end{tabular} \vspace{0.5cm}
\caption{\small Examples of antipodal waves obtained from the triangle wave
$\tau(t)=1-4\|t\|_{\T}$ by an odd increasing change of scale.}
\label{tab:antipodal-wave-examples}
\end{table}

The cosine is the motivating example in Table \ref{tab:antipodal-wave-examples}. In this case
$2\pi \rho_\alpha=\arccos\alpha $, so the reciprocal thresholds in Theorem \ref{thm:main} are precisely
$\alpha=\cos(2\pi/m)$ for integers $m\geq4$. In particular, for the original sign-correlation problem $\alpha=0$, one has $\rho_0=1/4$, and hence
\begin{align}
\lim_{n\to\infty}F_0(n)^{1/n}
&= \frac14.
\end{align}
Equivalently,
\begin{align}
F_0(n)
&= 4^{-n+o(n)}.
\end{align}
Thus $4^{-n}$ is the sharp exponential scale for the smallest possible proportion of points at which $n$ integer-frequency cosines have a common sign.

As observed in \cite{DGLLP}, the torus formulation of the cosine problem is closely related to the geometry underlying the Lonely Runner Conjecture. Integer frequencies may be viewed as runner velocities, while the threshold conditions prescribe arcs in which all runners are required to lie simultaneously. Related correlation questions for runners were studied by Perarnau and Serra \cite{PS16}.

\subsection{Outline}

For later use, we separate the two parts of $S_\alpha(A)$. For a finite nonempty set $A \subset \N$, write
\begin{align}
S_\alpha^+(A)
&:= \{t \in \T : f(at) > \alpha \text{ for every } a \in A\}, \\
S_\alpha^-(A)
&:= \{t \in \T : f(at) < -\alpha \text{ for every } a \in A\}.
\end{align}
Thus $S_\alpha(A) = S_\alpha^+(A) \cup S_\alpha^-(A)$. We use ordinary letters for scalar variables and bold letters for vectors. All sums and differences of subsets of $\T$ are taken in the group $\T$.

In Section \ref{sec:cyclic}, we prove Theorem \ref{thm:main-cover} using a base-$\rho_\alpha^{-1}$ expansion argument and a cube-packing estimate on the torus. Section \ref{sec:lower} establishes the universal lower bound by averaging translates of two antipodal boxes and applying a difference-set estimate on $\T$.

In Section \ref{sec:upper}, we prove the upper bounds using $\alpha$-isolating frequency sets. When $\rho_\alpha^{-1} \in \N$, we begin with $\{1,2,3\}$ and apply Theorem \ref{thm:main-cover} iteratively. When $\rho_\alpha^{-1} \notin \N$, a simpler dilation argument gives the desired estimate. In Section \ref{sec:rate}, we establish the existence of the exponential rate by proving a submultiplicative inequality for $F_\alpha$, using the asymptotic independence of widely separated frequency scales. Finally, Section \ref{sec:limitations} examines the polynomial loss in the reciprocal construction and discusses the remaining open questions.

\subsection*{Concurrent work}
A related preprint by Zang, Zhao, Zhao, and Zhou \cite{ZZZZ2026} was submitted to arXiv on October 23, 2026, and appeared in the arXiv announcement dated October 27, 2026. We became aware of their work through this announcement and are submitting the present manuscript on the same day. Our work was developed independently.

For the cosine sign-correlation problem, Theorem 1.3 of \cite{ZZZZ2026} gives the same lower bound $4^{1-n}$ and exponential decay rate $1/4$ as our Theorem \ref{thm:main}, together with an upper bound having the same polynomial exponent $\log_2(3/2)$. The proofs are also related: both lower bounds use an averaging argument and a difference-set inequality on the torus, while both upper bounds rely on the cyclic construction of Polak and Schrijver recorded in \cite{Polak2019}.

The two papers differ in scope. In \cite{ZZZZ2026}, the authors also establish sharp results for four frequencies and classify the cases of equality, whereas our work considers arbitrary thresholds for a class of symmetric periodic functions containing the cosine. We establish the existence of the exponential rate for every threshold, determine it when $\rho_\alpha^{-1} \in \N$, and obtain bounds in the remaining cases. Their upper-bound construction for general $n$ uses only odd frequencies, whereas ours includes even frequencies and eliminates the simultaneous negative-threshold set. We also examine the polynomial loss in the maximal iterative construction.

\subsection*{Acknowledgements} 
J.M. was partially supported by the Simons Foundation Grant $\# 453576$. The authors thank Lars Becker for some interesting conversations about this problem.
\subsection*{AI Disclosure} In preparing this manuscript, the authors used GPT-6 Astra to assist with language editing, syntax, and proofreading of author-written drafts, and to identify potentially relevant literature. The mathematical arguments and results are the authors' own. All AI-generated suggestions were reviewed by the authors, and all literature references identified with AI assistance were independently verified. The authors take full responsibility for the content and accuracy of the manuscript.

\section{The cyclic covering theorem}\label{sec:cyclic}

We begin with a packing estimate that will be used to prove the optimality assertion in Theorem \ref{thm:main-cover}.

\begin{lemma}\label{lem:packing}
Let $0 \leq \alpha < 1$ and suppose that $\rho_\alpha^{-1}\in\N$. If $\rho_\alpha < s \leq 1/2$, then the number of axis-parallel cubes of side length $s$ with disjoint interiors in $\T^d$ is at most $q_d$.
\end{lemma}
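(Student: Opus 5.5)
The plan is induction on $d$, slicing the cubes by the last coordinate and averaging. Throughout, write $N=\rho_\alpha^{-1}\in\N$ (so $N\geq 4$). First I record two elementary facts about the numbers $q_d$ of Theorem \ref{thm:main-cover}. A direct computation from the defining formula gives $q_0=1$, $q_1=N-1$, and the recurrence
\begin{align}
q_d &= N q_{d-1} - 1, \qquad d\geq 1.
\end{align}
Indeed, $Nq_{d-1}=\bigl((1-2\rho_\alpha)\rho_\alpha^{-d}+1\bigr)/(1-\rho_\alpha)$, and subtracting $1=(1-\rho_\alpha)/(1-\rho_\alpha)$ gives exactly $q_d$. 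In particular every $q_d$ is a positive integer.

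\emph{Base case $d=1$.} Interior-disjoint arcs of length $s$ in $\T$ have total length at most $1$. So their number $M$ satisfies $Ms\leq 1$, hence $M\leq 1/s<N$. Since $M$ is an integer, $M\leq N-1=q_1$. (The case $d=0$, with $q_0=1$, is also trivially true and could serve as the base instead.)

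\emph{Inductive step.} Suppose the claim holds in $\T^{d-1}$. Let $Q_1,\dots,Q_M$ be interior-disjoint axis-parallel cubes of side $s$ in $\T^d$, and write $Q_i=R_i\times I_i$. Here $R_i\subset\T^{d-1}$ is a cube of side $s$ and $I_i\subset\T$ is an arc of length $s$. The condition $s\leq 1/2$ guarantees that these are genuine embedded cubes and arcs. For $t\in\T$, let $J(t)=\{i: t\in \operatorname{int} I_i\}$.

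If $i\neq j$ both lie in $J(t)$, then $R_i$ and $R_j$ have disjoint interiors. Otherwise $\operatorname{int}R_i\cap\operatorname{int}R_j$ would be a nonempty open set, and its product with a small open arc around $t$ inside $\operatorname{int}I_i\cap\operatorname{int}I_j$ would lie in $\operatorname{int}Q_i\cap\operatorname{int}Q_j$. By the inductive hypothesis, $|J(t)|\leq q_{d-1}$ for every $t$.

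Integrating over $t$ gives
\begin{align}
Ms=\sum_{i=1}^M m(I_i)=\int_{\T}|J(t)|\,dm(t)\leq q_{d-1}.
\end{align}
Hence $M\leq q_{d-1}/s<Nq_{d-1}$, where the strict inequality uses $s>\rho_\alpha=1/N$. Since $M$ and $Nq_{d-1}$ are integers, $M\leq Nq_{d-1}-1=q_d$, which completes the induction.

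The main point requiring care is the strictness of $s>\rho_\alpha$ combined with integrality. The averaging bound by itself only gives $M<Nq_{d-1}$, and the saving of exactly $1$ per dimension comes from the fact that $q_{d-1}$ is an integer. This is precisely what the recurrence $q_d=Nq_{d-1}-1$ encodes, so verifying that recurrence (and hence integrality) from the closed formula is the step to check carefully. The slicing and measure-zero boundary issues are routine once interiors are used as above.
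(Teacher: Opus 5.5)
Your proof is correct and is essentially the paper's argument. The paper also slices perpendicular to one coordinate and integrates to get $s M_d(s) \leq M_{d-1}(s)$. It then uses $s > \rho_\alpha$ together with integrality to gain the $-1$, which produces the recurrence $q_d = \rho_\alpha^{-1} q_{d-1} - 1$; you have simply written out the slicing and recurrence checks in more detail.
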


\begin{proof}
Set $\ell:=\rho_\alpha^{-1}$, and let $M_j(s)$ denote the largest number of such cubes in $\T^j$, with $M_0(s)=1$. A slice perpendicular to the first coordinate meets the interiors of at most $M_{j-1}(s)$ cubes. Integrating the number of intersections over the first coordinate gives $s M_j(s) \leq M_{j-1}(s).$
Since $s>1/\ell$, $M_j(s) < \ell M_{j-1}(s).$
The packing numbers are integers, and therefore $M_j(s) \leq \ell M_{j-1}(s) - 1.$ Iterating from $M_0(s)=1$ gives $M_j(s)\leq q_j$.
\end{proof}

\begin{proof}[Proof of Theorem \ref{thm:main-cover}]
Set $\ell=\rho_\alpha^{-1}$. The definition of $q_d$ becomes
\begin{align}
q_d & = \frac{(\ell - 2)\ell^d + 1}{\ell - 1},
\end{align}
and in particular $q_0=1$ and $q_d = \ell q_{d-1} - 1.$ Thus each $q_d$ is an integer and $q_d\equiv-1\pmod{\ell}$ for $d\geq1$. Moreover, for $d\geq2$ one has $q_{d-1}>\ell^{d-2}$, so $D_d$ consists of $d$ distinct positive integers.

We first prove the covering property. Fix $1\leq k<q_d$, write $q=q_d$, and suppose for contradiction that
\begin{align}
\left\|\frac{bk}{q}\right\|_{\T}
&\leq \frac1\ell
\end{align}
for every $b\in D_d$. Let $r$ be the unique integer with $1\leq r<q$ such that $r \equiv \ell^{-1}k \pmod q.$ Since $q=\ell q_{d-1}-1$, we have $q_{d-1}\equiv\ell^{-1}\pmod q$. Hence the elements of $D_d$ are congruent modulo $q$ to $\ell^{-1},1,\ell,\ldots,\ell^{d-2},$
and therefore
\begin{align}
\left\{\ell^j\frac rq\right\}
    \in \left[0,\frac1\ell\right]
    \cup \left[1-\frac1\ell,1\right)
&& 0\leq j\leq d-1.
\end{align}
The two nonzero endpoints cannot occur because $q\equiv-1\pmod{\ell}$. Thus the first $d$ digits in the base-$\ell$ expansion of $r/q$ all belong to $\{0,\ell-1\}$. Since $\ell\equiv1\pmod{\ell-1}$, it follows that
\begin{align}
\ell - 1 &\mid \left\lfloor \ell^d\frac rq\right\rfloor.
\end{align}

On the other hand, $(\ell - 1)q = (\ell - 2)\ell^d + 1,$ so
\begin{align}
\ell^d\frac rq
&= \frac{(\ell - 1)r}{\ell - 2}
    - \frac{r}{(\ell - 2)q}.
\end{align}
Write $r=(\ell-2)u+s$ with $0\leq s\leq\ell-3$. If $s=0$, then
\begin{align}
\left\lfloor \ell^d\frac rq\right\rfloor
&= (\ell - 1)u - 1,
\end{align}
while if $1\leq s\leq\ell-3$, then
\begin{align}
\left\lfloor \ell^d\frac rq\right\rfloor
&= (\ell - 1)u + s.
\end{align}
In either case this integer is not divisible by $\ell-1$, a contradiction. Hence, for every $1\leq k<q_d$, there is $b\in D_d$ such that
\begin{align}
\left\|\frac{bk}{q_d}\right\|_{\T}
&> \rho_\alpha.
\end{align}

It remains to prove optimality. Suppose that $q\geq2$ and $b_1,\ldots,b_d\in\Z$ satisfy
\begin{align}
\max_{1\leq j\leq d}
\left\|\frac{k b_j}{q}\right\|_{\T}
&> \rho_\alpha
\qquad\text{for every }1\leq k<q.
\end{align}
For $0\leq k<q$, define
\begin{align}
\bs z_k
&:= \left(\frac{k b_1}{q},\ldots,\frac{k b_d}{q}\right)
\in\T^d.
\end{align}
The points $\bs z_0,\ldots,\bs z_{q-1}$ are distinct. Moreover, the difference of any two distinct points has some coordinate at distance greater than $\rho_\alpha$ from zero. Since there are finitely many such differences, their minimum distance in the toroidal supremum norm is some $s>\rho_\alpha$, with $s\leq1/2$.

The axis-parallel cubes of side length $s$ centered at $\bs z_0,\ldots,\bs z_{q-1}$ therefore have disjoint interiors. Lemma \ref{lem:packing} gives $q\leq q_d$, proving the optimality assertion.
\end{proof}

\section{The universal lower bound}\label{sec:lower}

\begin{proposition}\label{prop:lower}
Let $0 \leq \alpha < 1$. For every integer $n \geq 1$ and every $n$-element set $A \subset \N$,
\begin{align}
P_\alpha(A) & \geq 4 \rho_\alpha^n.
\end{align}
Consequently, $F_\alpha(n) \geq 4 \rho_\alpha^n.$
\end{proposition}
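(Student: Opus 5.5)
The plan is to realize both halves of $S_\alpha(A)$ as difference sets on $\T$ and bound their measures from below by a Kneser-type inequality. Write $A=\{a_1,\ldots,a_n\}$ and $\rho=\rho_\alpha$. Since $f$ is even, $1$-periodic and strictly decreasing on $[0,1/2]$, we have $f(u)>\alpha$ if and only if $\|u\|_{\T}<\rho$. Using $f(u+1/2)=-f(u)$, we also have $f(u)<-\alpha$ if and only if $\|u-1/2\|_{\T}<\rho$. Hence $S^+_\alpha(A)$ is the preimage of the open box $B=(-\rho,\rho)^n\subset\T^n$ under the homomorphism $\Phi(t)=(a_1t,\ldots,a_nt)$, and $S^-_\alpha(A)$ is the preimage of $B+\bs h$ with $\bs h=(1/2,\ldots,1/2)$. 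Because $\rho\le 1/4$, the sets $B$ and $B+\bs h$ are disjoint, so $P_\alpha(A)=m(S^+_\alpha(A))+m(S^-_\alpha(A))$.

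Let $C=(-\rho/2,\rho/2)^n$, and for $\bs x\in\T^n$ set $E_{\bs x}=\Phi^{-1}(\bs x+C)$, an open subset of $\T$. The key inclusions are
\begin{align}
E_{\bs x}-E_{\bs x}\subset S^+_\alpha(A),\qquad E_{\bs x+\bs h}-E_{\bs x+\bs h}\subset S^+_\alpha(A),\qquad E_{\bs x+\bs h}-E_{\bs x}\subset S^-_\alpha(A).
\end{align}
All three follow from $\Phi(t-t')=\Phi(t)-\Phi(t')$ and $C-C\subset B$. By Fubini and the fact that $\Phi(t)$ is a translate, we get $\int_{\T^n} m(E_{\bs x})\,d\bs x=m(C)=\rho^n$. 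Applying the same identity to $\bs x+\bs h$, we can fix $\bs x$ with
\begin{align}
m(E_{\bs x})+m(E_{\bs x+\bs h})\ge 2\rho^n>0 .
\end{align}

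Next I would invoke the Raikov--Macbeath (Kneser) inequality on $\T$: for nonempty open $X,Y\subset\T$, one has $m(X-Y)\ge\min\{1,m(X)+m(Y)\}$. Write $u=m(E_{\bs x})$ and $v=m(E_{\bs x+\bs h})$. If both sets are nonempty, the inequality gives $m(S^-_\alpha(A))\ge\min\{1,u+v\}$ and $m(S^+_\alpha(A))\ge\min\{1,2\max(u,v)\}\ge\min\{1,u+v\}$. If one of them is empty, the $S^-$ bound would fail, so I would handle that case separately. Since $S^-_\alpha(A)=-S^-_\alpha(A)+0$ is symmetric, it is cleanest to instead observe that $S^-_\alpha(A)\supset t_0-E_{\bs x}$ for any $t_0\in E_{\bs x+\bs h}$. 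In the degenerate case, the trivial bound $m(X-Y)\ge \max(m(X),m(Y))$ still combines with the averaging to give the claim, possibly after choosing $\bs x$ so that both sets are nonempty. An alternative is to note that the set of good $\bs x$ has positive measure, and the set where both $E$'s are nonempty contains a neighbourhood of the point $\Phi(t_1)$ for a suitable $t_1$ with $\Phi(t_1)-\Phi(t_2)$ near $\bs h$.

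Adding the two bounds gives $P_\alpha(A)\ge 2\min\{1,2\rho^n\}$. This equals $4\rho^n$ because $2\rho^n\le 1/2$. The main obstacle is the careful use of the torus sumset inequality: it needs open sets and the caps at $1$, and it requires ensuring both $E_{\bs x}$ and $E_{\bs x+\bs h}$ are nonempty for the chosen $\bs x$. I would handle nonemptiness by restricting the averaging to the positive-measure set of $\bs x$ for which both are nonempty. If that set is deficient, I would fall back on the direct bounds $m(S^\pm)\ge 2\max(u,v)$, averaged separately.
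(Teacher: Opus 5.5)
Your main argument is correct, but the paragraph on the degenerate case has to be replaced. That case is not a mere technicality. For $A=\{1,2,3\}$ (the seed of the paper's upper-bound construction) one has $S_\alpha^-(A)=\varnothing$. Hence for \emph{every} $\bs x$ at least one of $E_{\bs x}$, $E_{\bs x+\bs h}$ is empty, since otherwise $E_{\bs x+\bs h}-E_{\bs x}$ would be a nonempty subset of $S_\alpha^-(A)$. Your suggested fallbacks do not work as stated:
\begin{enumerate}
\item Restricting the averaging to the $\bs x$ for which both preimages are nonempty, or looking for $t_1,t_2$ with $\Phi(t_1)-\Phi(t_2)$ near $\bs h$, fails outright for this $A$.
\item The trivial bound $m(X-Y)\ge\max\{m(X),m(Y)\}$ only gives $2\rho_\alpha^n$.
\item The claim $m(S_\alpha^-(A))\ge 2\max(u,v)$ is false here: $u>0$ at $\bs x=\bs 0$, while $S_\alpha^-(A)=\varnothing$.
\end{enumerate}
The fix uses only what you already have. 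If one preimage is empty, then $\max(u,v)=u+v\ge 2\rho_\alpha^n$, so $m(S_\alpha^+(A))\ge\min\{1,2\max(u,v)\}\ge 4\rho_\alpha^n$ on its own, and no bound on $S_\alpha^-(A)$ is needed.

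With that repair, your proof differs from the paper's only in how the two antipodal boxes are combined. The paper takes $C=(0,\rho_\alpha)^n$ and the single set $E=\Phi^{-1}\bigl((\bs v+C)\cup(\bs v+C+\bs h)\bigr)$, chosen by the same averaging so that $m(E)\ge 2\rho_\alpha^n$. It notes that $E-E\subset S_\alpha(A)$: differences inside one box land in $S_\alpha^+(A)$, and differences across the boxes land in $S_\alpha^-(A)$. It then needs only the symmetric inequality $m(E-E)\ge\min\{1,2m(E)\}$. This it proves directly: pick a boundary point $a-b$ of $\overline E-\overline E$ and observe that $E-a$ and $E-b$ are disjoint subsets of $\overline E-\overline E$.

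Working with the union makes the emptiness issue disappear. It also avoids the two-set Raikov--Macbeath inequality $m(X-Y)\ge\min\{1,m(X)+m(Y)\}$, which is a heavier (if classical) result than its symmetric case. Your split version does give separate information on the two halves, for instance $m(S_\alpha^+(A))\ge\min\{1,2\max(u,v)\}$. The final constant $4\rho_\alpha^n$ is the same either way.
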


\begin{proof}
Write $A = \{a_1,\ldots,a_n\}$ and consider the homomorphism $\Phi:\T \to \T^n,$ with $\Phi(t) = (a_1 t,\ldots,a_n t).$
Let
\begin{align}
C &:= (0,\rho_\alpha)^n, & 
\bs h & := \left(\frac12,\ldots,\frac12\right).
\end{align}
Since $\rho_\alpha \leq 1/4$, the sets $C$ and $C + \bs h$ are disjoint. For $\bs v \in \T^n$, define
\begin{align}
E_{\bs{v}} &:= \Phi^{-1}\bigl((\bs v + C)\cup(\bs v + C + \bs h)\bigr).
\end{align}
For each fixed $t \in \T$, the set of $\bs v \in \T^n$ for which $\Phi(t) \in \bs v + C$ has measure $\rho_\alpha^n$, and the same is true with $C + \bs h$ in place of $C$. Since the corresponding sets of translations are disjoint, Fubini's theorem gives
\begin{align}
\int_{\T^n} m(E_{\bs{v}})\,d\bs v &= 2 \rho_\alpha^n.
\end{align}
Hence there exists $\bs v \in \T^n$ such that $m(E_{\bs{v}}) \geq 2 \rho_\alpha^n.$ Fix such a $\bs v$ and write $E = E_{\bs{v}}$. Notice that $E$ is a finite union of open arcs.

We claim that $E - E \subset S_\alpha(A)$. Indeed, let $x,y \in E$. If $\Phi(x)$ and $\Phi(y)$ belong to the same one of the two boxes $\bs v + C$ and $\bs v + C + \bs h$, then, for every $1 \leq i \leq n$,
\begin{align}
\|a_i(x - y)\|_{\T} &< \rho_\alpha.
\end{align}
Thus $x - y \in S_\alpha^+(A)$. If $\Phi(x)$ and $\Phi(y)$ belong to opposite boxes, then, for every $1 \leq i \leq n$,
\begin{align}
\left\|a_i(x - y) - \frac12\right\|_{\T} &< \rho_\alpha,
\end{align}
and therefore $x - y \in S_\alpha^-(A)$.

The classical sumset inequality on the torus \cite{Macbeath1953} gives, for every finite union of open arcs $E \subset \T$, $m(E - E) \geq \min\{1,2m(E)\}$.
To see this, put $K = \overline E$ and $D = K - K$. Since $E$ is a finite union of open arcs, $m(D) = m(E - E)$. If $D = \T$, the assertion is immediate. Otherwise, choose $u \in \partial D$ and write $u = a - b$ with $a,b \in K$. Then $E - a$ and $E - b$ are disjoint subsets of $D$: otherwise, if $z \in (E - a)\cap(E - b)$, then
\begin{align}
u &= (z + a) - (z + b) \in E - E,
\end{align}
which contradicts $u \in \partial D$ because $E - E$ is open. Hence
\begin{align}
m(E - E) &= m(D) \\
&\geq m(E - a) + m(E - b) \\
&= 2m(E).
\end{align}

Combining this estimate with the inclusion $E - E \subset S_\alpha(A)$ gives
\begin{align}
P_\alpha(A) &= m(S_\alpha(A)) \\
&\geq m(E - E) \\
&\geq \min\{1,2m(E)\} \\
&\geq 4 \rho_\alpha^n,
\end{align}
where the last inequality follows from $m(E) \geq 2\rho_\alpha^n$ and $4\rho_\alpha^n \leq 1$. Taking the infimum over all $n$-element sets $A \subset \N$ gives the desired bound for $F_\alpha(n)$.
\end{proof}

\begin{remark}
The proof remains valid for a list of nonzero integer frequencies, even with repetitions. The two factors of $2$ in the lower bound have separate origins: the first comes from averaging a pair of antipodal boxes, while the second comes from the difference-set estimate. The argument concerns the un-shifted family $f(at)$; no common phase is implicit.
\end{remark}

\section{The upper bounds}\label{sec:upper}

Both upper bounds use the following notion.

\begin{definition}
Let $0 \leq \alpha < 1$. A finite set $B\subset\N$ is called $\alpha$-isolating if $1\in B$ and, writing $N=\max B$,
\begin{align}
S_\alpha^+(B) = \{t\in\T:\|t\|_{\T}<\rho_\alpha/N\}, 
    \qquad \text{ and } \qquad 
S_\alpha^-(B) = \varnothing.
\end{align}
For such a set, $P_\alpha(B) = 2\rho_\alpha/N.$
\end{definition}

\begin{proposition}\label{prop:upper-integer}
Suppose that $0 \leq \alpha < 1$ and $\rho_\alpha^{-1}\in\N$. Then there is a constant $C_\alpha > 0$ such that for every integer $n \geq 1$,
\begin{align}
F_\alpha(n)
&\leq C_\alpha
n^{\log_2((1-\rho_\alpha)/(1-2\rho_\alpha))}
\rho_\alpha^n.
\end{align}
\end{proposition}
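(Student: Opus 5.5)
The plan is to build $\alpha$-isolating sets $B$ of every size $n$ with $\max B$ as large as possible. For such a set $P_\alpha(B)=2\rho_\alpha/\max B$, so an isolating set with
\begin{align}
\max B \geq c_\alpha\, n^{-\log_2((1-\rho_\alpha)/(1-2\rho_\alpha))}\rho_\alpha^{-n}
\end{align}
gives the proposition. Write $\ell=\rho_\alpha^{-1}$ and $\rho=\rho_\alpha$.

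\textbf{Base case.} First I will check directly that $B_0=\{1,2,3\}$ is $\alpha$-isolating. For $S^-_\alpha$: the conditions $\|t-\tfrac12\|_{\T}<\rho$ and $\|2t-\tfrac12\|_{\T}<\rho$ are incompatible when $\rho\le 1/4$, so $S^-_\alpha(B_0)=\varnothing$. For $S^+_\alpha$: the conditions $\|t\|,\|2t\|,\|3t\|<\rho$ force $\|t\|_{\T}<\rho/3$, using $\rho\le1/4$ and a short case analysis on $t$.

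\textbf{Inductive step.} Given an $\alpha$-isolating $B$ with $N=\max B$ and a parameter $d\ge1$, I set $q=q_d$ and
\begin{align}
B'=B\cup\{Nb:\ b\in D_d\}\cup\{Nq\},
\end{align}
possibly with the $\rho^{-j}$ multipliers rescaled so that the elements stay distinct. I claim $B'$ is $\alpha$-isolating with $\max B'=Nq$.

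For $S^+_\alpha$: if $t\in S^+_\alpha(B')$, then $\|t\|<\rho/N$, so $x=Nt$ satisfies $\|x\|_{\T}<\rho$. Write $x=(k+\theta)/q$ with $k$ the nearest integer to $qx$ and $|\theta|\le 1/2$, and note $|k|<q\rho+\tfrac12<q$. The condition $\|qx\|_{\T}<\rho$ gives $|\theta|<\rho$. If $k\ne0$, Theorem \ref{thm:main-cover} provides $b\in D_d$ with $\|bk/q\|_{\T}>\rho$. The strictness of that inequality, together with the slack coming from $q\equiv -1\pmod\ell$, should absorb the perturbation $b\theta/q$ and contradict $\|bx\|<\rho$. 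This perturbation estimate is the step I expect to be the main obstacle. If it fails as stated, I would first replace $Nq$ by $Nq\ell$, which shrinks $\theta$ at the cost of a constant factor, or else redo the digit argument in the proof of Theorem \ref{thm:main-cover} directly for real $x$ rather than for $k/q$. Once $k=0$ is forced, we get $\|t\|<\rho/(Nq)$, as required.

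For $S^-_\alpha$: emptiness is inherited, since $B\subset B'$ and $S^-_\alpha(B)=\varnothing$.

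\textbf{Counting.} Each step adds about $d$ elements and multiplies $\max B$ by
\begin{align}
q_d\approx \frac{1-2\rho}{1-\rho}\,\rho^{-d}.
\end{align}
At every stage I choose $d$ roughly equal to the current size $|B|$, so that the size roughly doubles. Starting from $|B_0|=3$, one reaches size $n$ after about $\log_2 n$ steps. The product of the $\rho^{-d}$ factors is $\rho^{-n+O(1)}$, and the constant factors contribute
\begin{align}
\left(\frac{1-2\rho}{1-\rho}\right)^{\log_2 n+O(1)} = c\, n^{-\log_2((1-\rho)/(1-2\rho))}.
\end{align}
To hit every $n$ exactly, not only near-powers of two, the last step will use a $d$ smaller than the current size, tuned to land exactly on $n$. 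This costs only a bounded extra constant factor, because the constant factor in $q_d$ is uniform in $d$. Substituting $\max B$ into $P_\alpha(B)=2\rho/\max B$ then yields the bound with some $C_\alpha$.
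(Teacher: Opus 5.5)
\textbf{Gap in the inductive step.} Your inductive step fails, and the problem is the construction itself, not a delicate estimate. For $B'=B\cup N D_d\cup\{Nq_d\}$ we have $S^+_\alpha(B')=\{t:\|t\|_{\T}<\rho/N,\ Nt\in S^+_\alpha(D_d\cup\{q_d\})\}$. So you would need $D_d\cup\{q_d\}$ by itself to confine $x=Nt$ to $|x|<\rho/q_d$. This is false for every $d\geq 2$. Take any $x\in\bigl((1-\rho)/q_d,\,1/(q_d+1)\bigr)$; this interval is nonempty because $q_{d-1}>1$. Using $q_d+1=\ell q_{d-1}$ and $q_{d-1}>\ell^{d-2}$, such an $x$ satisfies
\begin{itemize}
\item $q_{d-1}x<\rho$;
\item $\ell^jx<\ell^{d-2}/(\ell q_{d-1})<\rho$ for $j\leq d-2$;
\item $1-\rho<q_dx<1$, so $\|q_dx\|_{\T}<\rho$.
\end{itemize}
Yet $x>\rho/q_d$. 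Concretely, take $\rho=1/4$, $B=\{1,2,3\}$ and $d=2$, so $B'=\{1,2,3,9,33\}$. Then $t=2/75$ gives $3t=0.08$, $9t=0.24$ and $33t=0.88$, so every frequency is above threshold, while $\|t\|_{\T}>1/132=\rho/(Nq_d)$. Only $d=1$ survives, and that gives the rate $1/(\ell-1)$ rather than $\rho$.

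\textbf{Why it fails, and the missing idea.} The cause is what you suspected. The single condition $\|q_dx\|_{\T}<\rho$ only gives $|\theta|<\rho$. So $b\theta/q_d$ can be as large as $q_{d-1}\rho/q_d\approx\rho^2$, whereas the slack from $q_d\equiv-1\pmod{\ell}$ is only $\rho/q_d$. Your fallbacks do not help. Adding $Nq_d\ell$ only localizes $q_dx$ near multiples of $1/\ell$. A digit argument for real $x$ cannot prove a false statement. The missing idea is to put the whole previous isolating set at the fine scale: the paper uses $B'=D_d\cup q_dB$ under the constraint $q_{d-1}\leq N=\max B$. Then $t\in S^+_\alpha(q_dB)$ forces $q_dt=k+u$ with $|u|<\rho/N$, because $B$ is isolating. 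Hence $|bu/q_d|<\rho/q_d$ for every $b\leq q_{d-1}\leq N$, which is exactly absorbed by the slack, and $S^-_\alpha(B')=\varnothing$ is inherited from $q_dB$. With this step your counting goes through essentially as you describe; the constraint $q_{d-1}\leq N$ gives $N_{j+1}>N_j^2$ and hence about $\log_2 n$ steps.

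\textbf{A smaller error in the base case.} Your claim that $\|t-\tfrac12\|_{\T}<\rho$ and $\|2t-\tfrac12\|_{\T}<\rho$ are incompatible is false for $\rho\in(1/6,1/4]$, for example $\rho=1/4$ and $t=0.7$. Write $t=\tfrac12+s$. Use frequency $3$ first to get $|s|<\rho/3$; then frequency $2$ is above threshold, which rules out $S^-_\alpha$.
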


\begin{proof}
The set $B_0=\{1,2,3\}$ is $\alpha$-isolating. Indeed, by the threshold-set description, if $t\in S_\alpha^+(B_0)$ then $\|t\|_{\T}<\rho_\alpha$ and $\|3t\|_{\T}<\rho_\alpha$. Choosing the representative with $|t|<\rho_\alpha$, the inequality $\rho_\alpha\leq1/4$ gives $|3t|<1-\rho_\alpha$, and therefore $|t|<\rho_\alpha/3$. Conversely, if $\|t\|_{\T}<\rho_\alpha/3$, then $\|jt\|_{\T}<\rho_\alpha$ for $j=1,2,3$. Thus $S_\alpha^+(B_0)=\{t\in\T:\|t\|_{\T}<\rho_\alpha/3\}$. For the negative set, write $t=1/2+s$ with $|s|<\rho_\alpha$. If the frequencies $1$ and $3$ are both below $-\alpha$, then $\|3s\|_{\T}<\rho_\alpha$, and the same argument gives $|s|<\rho_\alpha/3$. Hence $\|2t\|_{\T}=\|2s\|_{\T}<\rho_\alpha$, so the frequency $2$ is above $\alpha$. Therefore $S_\alpha^-(B_0)=\varnothing$.

We next establish the iterative step. Let $B$ be $\alpha$-isolating, let $N=\max B$, and suppose that $d\geq1$ satisfies $q_{d-1}\leq N$. Since $\max D_d=q_{d-1}<q_d=\min(q_dB)$, the two blocks are disjoint, and hence $|D_d\cup q_dB|=|B|+d$ and $\max(D_d\cup q_dB)=q_dN$.

Suppose first that $t\in S_\alpha^+(q_dB)$. Since $B$ is $\alpha$-isolating, there are $k\in\Z$ and $u\in\R$ so that
\begin{align}
t
&= \frac{k+u}{q_d},
\qquad
|u|<\frac{\rho_\alpha}{N}.
\end{align}
Assume that $k\not\equiv0\pmod {q_d}$. By Theorem \ref{thm:main-cover}, there is $b\in D_d$ such that
\begin{align}
\left\|\frac{bk}{q_d}\right\|_{\T}
&> \rho_\alpha.
\end{align}
Since $q_d=\ell q_{d-1}-1$ with $\ell=\rho_\alpha^{-1}$, the point $bk/q_d$ lies at distance at least $\rho_\alpha/q_d$ from the boundary of the interval $\{x\in\T:\|x\|_{\T}<\rho_\alpha\}$. On the other hand,
\begin{align}
\left|\frac{bu}{q_d}\right|
&< \frac{b\rho_\alpha}{Nq_d}
\leq \frac{\rho_\alpha}{q_d}.
\end{align}
Thus $\|bt\|_{\T}\geq\rho_\alpha$, so $t\notin S_\alpha^+(D_d\cup q_dB)$.

If $k\equiv0\pmod {q_d}$, then modulo $1$ we may write $t=u/q_d$. For every $b\in D_d$,
\begin{align}
|bt|
&< \frac{b\rho_\alpha}{Nq_d}
\leq \frac{\rho_\alpha}{q_d}
< \rho_\alpha.
\end{align}
Hence all the frequencies in $D_d$ are positive at $t$. It follows that
\begin{align}
S_\alpha^+(D_d\cup q_dB)
&= \left\{t\in\T:
\|t\|_{\T}<\frac{\rho_\alpha}{q_dN}\right\}.
\end{align}
Furthermore, $S_\alpha^-(D_d\cup q_dB) \subset S_\alpha^-(q_dB) = \varnothing,$ because $t\in S_\alpha^-(q_dB)$ would imply $q_dt\in S_\alpha^-(B)$. Thus $D_d\cup q_dB$ is again $\alpha$-isolating.

The preceding step also gives
\begin{align}
\rho_\alpha^{-(|B|+d)}P_\alpha(D_d\cup q_dB)
&= \rho_\alpha^{-|B|}P_\alpha(B)
    \frac{\ell^d}{q_d} \\
&= \rho_\alpha^{-|B|}P_\alpha(B)
    \frac{\ell-1}{\ell-2+\ell^{-d}} \\
&< \rho_\alpha^{-|B|}P_\alpha(B)
    \frac{\ell-1}{\ell-2}.
\end{align}
We now iterate using the largest admissible value of $d$. Starting from $B_0=\{1,2,3\}$, suppose that $B_j$ has been constructed, and write $N_j:=\max B_j$. Choose $d_j$ maximally subject to $q_{d_j-1}\leq N_j$, and set
\begin{align}
B_{j+1}
&:= D_{d_j}\cup q_{d_j}B_j.
\end{align}
By maximality, $q_{d_j-1} \leq N_j < q_{d_j}$, and so $N_{j+1} = N_jq_{d_j} > N_j^2.$ Thus $\log_\ell N_j \geq 2^j\log_\ell3.$ Also $\log_\ell N_j<|B_j|$ for every $j$: this holds initially and is preserved because $q_d<\ell^d$.

Fix $n\geq3$ and choose $j$ such that $|B_j|\leq n<|B_{j+1}|$. If $n=|B_j|$, take $A=B_j$. Otherwise, $1\leq n-|B_j|<d_j$, and
\begin{align}
q_{n-|B_j|-1}
&\leq q_{d_j-1}
\leq N_j,
\end{align}
so the same construction with $D_{n-|B_j|}$ gives an $\alpha$-isolating set $A$ with exactly $n$ elements. The number of construction steps used to obtain $A$ is at most $j+1$. Since $2^j\log_\ell3 \leq \log_\ell N_j < |B_j| \leq n,$ we have
\begin{align}
\left(\frac{\ell-1}{\ell-2}\right)^{j+1}
&\leq C_\alpha
n^{\log_2((\ell-1)/(\ell-2))}.
\end{align}

Finally, $\rho_\alpha^{-3}P_\alpha(B_0) = 2\ell^2/3.$ Repeated application of the preceding estimate therefore gives
\begin{align}
\rho_\alpha^{-n}P_\alpha(A)
&\leq \frac{2\ell^2}{3}
\left(\frac{\ell-1}{\ell-2}\right)^{j+1}
\\
&\leq C_\alpha
n^{\log_2((\ell-1)/(\ell-2))}
\\
&= C_\alpha
n^{\log_2((1-\rho_\alpha)/(1-2\rho_\alpha))}.
\end{align}
Multiplying by $\rho_\alpha^n$ gives the desired estimate for $n\geq3$. An appropriate choice of $C_\alpha$ takes care of $n=1,2$.
\end{proof}

\begin{remark}
The use of an even frequency in the isolating construction is essential for breaking a symmetry that is present for odd frequencies. Indeed, if every $a\in A$ is odd, then periodicity and the antipodal symmetry of $f$ give
\begin{align}
f\left(a\left(t+\frac12\right)\right)
&= -f(at).
\end{align}
Consequently, translation by $1/2$ interchanges $S_\alpha^+(A)$ and $S_\alpha^-(A)$, and in particular
\begin{align}
m(S_\alpha^+(A))
&= m(S_\alpha^-(A)).
\end{align}
The frequency $2$ in the seed $B_0=\{1,2,3\}$ breaks this symmetry and allows the simultaneous negative set to be empty while the simultaneous positive set remains nonempty. This asymmetry is then preserved by the subsequent isolating construction.
\end{remark}

\begin{proposition}\label{prop:upper-noninteger}
Suppose that $0 \leq \alpha < 1$ and $\rho_\alpha^{-1}\notin\N$. Then there is a constant $C_\alpha > 0$ such that for every integer $n \geq 1$,
\begin{align}
F_\alpha(n)
&\leq C_\alpha
\left\lfloor\rho_\alpha^{-1}\right\rfloor^{-n}.
\end{align}
\end{proposition}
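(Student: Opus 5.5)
The plan is to imitate the proof of Proposition~\ref{prop:upper-integer}, replacing the cyclic cover $D_d$ by the single multiplier $1$ and dilating by $\ell:=\lfloor\rho_\alpha^{-1}\rfloor$ at each step. Since $\rho_\alpha\le 1/4$ and $\rho_\alpha^{-1}\notin\N$, we have $\ell\ge4$ and $1/(\ell+1)<\rho_\alpha<1/\ell$, so $\rho_\alpha\ell<1$ strictly. The key step is a one-element lemma for an $\alpha$-isolating set $B$ with $N=\max B$ and an integer $m\ge2$: the set $B':=\{1\}\cup mB$ is again $\alpha$-isolating, provided
\begin{align}
\frac1m\Bigl(1-\frac{\rho_\alpha}{N}\Bigr) > \rho_\alpha .
\end{align}
Then $|B'|=|B|+1$ and $\max B'=mN$, so $P_\alpha(B')=P_\alpha(B)/m$.

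To prove the lemma, I would argue as in the proof of Proposition~\ref{prop:upper-integer}.
\begin{enumerate}
\item Disjointness: $1<m=\min(mB)$, so $|B'|=|B|+1$.
\item Negative set: $S_\alpha^-(B')\subset S_\alpha^-(mB)=\varnothing$, because $t\in S_\alpha^-(mB)$ would give $mt\in S_\alpha^-(B)$.
\item Positive set: if $t\in S_\alpha^+(mB)$, then $mt$ lies within $\rho_\alpha/N$ of an integer $k$. Hence $t=(k+u)/m$ with $|u|<\rho_\alpha/N$.
\item Case $k\not\equiv0\pmod m$: then $\|k/m\|_{\T}\ge 1/m$, so $\|t\|_{\T}\ge \frac1m(1-\rho_\alpha/N)>\rho_\alpha$. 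Thus the frequency $1$ fails and $t\notin S_\alpha^+(B')$.
\item Case $k\equiv0\pmod m$: then $\|t\|_{\T}<\rho_\alpha/(mN)<\rho_\alpha$, so the frequency $1$ also passes.
\end{enumerate}
Together these give $S_\alpha^+(B')=\{\|t\|_{\T}<\rho_\alpha/(mN)\}$, so $B'$ is $\alpha$-isolating.

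The main obstacle is the condition of the lemma when $m=\ell$, which reads $\rho_\alpha/N<1-\rho_\alpha\ell$. It holds only once $N$ is large, since $1-\rho_\alpha\ell>0$ may be tiny. I would handle this with a preliminary phase using $m=2$. Here the condition becomes $\frac12(1-\rho_\alpha/N)>\rho_\alpha$, which always holds because $\rho_\alpha\le1/4$ and $N\ge1$.
\begin{enumerate}
\item Start from the seed $B_0=\{1,2,3\}$, which is $\alpha$-isolating by the proof of Proposition~\ref{prop:upper-integer}.
\item Apply the doubling step $r_\alpha$ times, where $r_\alpha$ depends only on $\alpha$ and is chosen so that $N=3\cdot2^{r_\alpha}\ge N_0:=\lceil\rho_\alpha/(1-\rho_\alpha\ell)\rceil+1$.
\item Then apply the $\ell$-step repeatedly. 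Each application adds one element and multiplies $N$ by $\ell$, and the condition persists because $N$ only grows.
\end{enumerate}

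For $n\ge n_0:=3+r_\alpha$, this yields an $\alpha$-isolating set $A$ with $|A|=n$ and $\max A=3\cdot 2^{r_\alpha}\ell^{\,n-n_0}$. Hence
\begin{align}
F_\alpha(n)\le P_\alpha(A)=\frac{2\rho_\alpha}{3\cdot2^{r_\alpha}}\,\ell^{-(n-n_0)}\le C_\alpha\,\ell^{-n}.
\end{align}
For the finitely many $n<n_0$, it suffices to enlarge $C_\alpha$, since $F_\alpha(n)\le1$.
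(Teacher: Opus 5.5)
Your proof is correct and follows the paper's argument. Both iterate $B\mapsto\{1\}\cup\ell B$ from an $\alpha$-isolating seed whose maximum exceeds $\rho_\alpha/(1-\ell\rho_\alpha)$, with frequency $1$ excluding the noncentral components and the negative set staying empty; the only difference is the seed, where the paper takes $\{1,\ldots,N\}$ with $N=\max\{3,\lceil\rho_\alpha/(1-\ell\rho_\alpha)\rceil\}$ and checks isolation directly, while you reach a large maximum by doubling from $\{1,2,3\}$ using your one-step lemma with $m=2$, which is valid since $\frac12(1-\rho_\alpha/N)>\rho_\alpha$ always holds.
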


\begin{proof}
Set $\ell=\lfloor\rho_\alpha^{-1}\rfloor$. Then $\ell\geq4$ and $\rho_\alpha<1/\ell$. Let $B_0=\{1,\ldots,N\}$, where
\begin{align}
N &:= \max\left\{3,
    \left\lceil
    \frac{\rho_\alpha}{1-\ell\rho_\alpha}
    \right\rceil\right\}.
\end{align}

We first observe that $B_0$ is $\alpha$-isolating. Certainly, if $\|t\|_{\T}<\rho_\alpha/N$, then $\|jt\|_{\T}<\rho_\alpha$ for every $1\leq j\leq N$. Conversely, suppose that $t\in S_\alpha^+(B_0)$. The frequency $1$ gives $\|t\|_{\T}<\rho_\alpha$, so choose the representative satisfying $|t|<\rho_\alpha$. If $|t|\geq\rho_\alpha/N$, let $j=\lceil\rho_\alpha/|t|\rceil$. Then $1\leq j\leq N$ and
\begin{align}
\rho_\alpha
&\leq j|t|
< 2\rho_\alpha
\leq \frac12.
\end{align}
Hence $\|jt\|_{\T}\geq\rho_\alpha$, a contradiction. Therefore
\begin{align}
S_\alpha^+(B_0)
&= \left\{t\in\T:
\|t\|_{\T}<\frac{\rho_\alpha}{N}\right\}.
\end{align}
Moreover, $B_0$ contains $\{1,2,3\}$, whose frequencies have no common negative point, as observed in the preceding proof. Thus $S_\alpha^-(B_0)=\varnothing$.

Now let $B$ be $\alpha$-isolating with $\max B\geq N$, and set $\widetilde B := \{1\}\cup\ell B.$ If $t\in S_\alpha^+(\ell B)$, then there are $k\in\{0,\ldots,\ell-1\}$ and $u\in\R$ such that
\begin{align}
t
&= \frac{k+u}{\ell},
\qquad
|u|<\frac{\rho_\alpha}{\max B}.
\end{align}
If $k\neq0$, then
\begin{align}
\|t\|_{\T}
&> \frac1\ell-\frac{\rho_\alpha}{\ell\max B}
\geq \rho_\alpha,
\end{align}
where the last inequality follows from $\max B\geq N\geq\rho_\alpha/(1-\ell\rho_\alpha)$. Thus the frequency $1$ excludes every noncentral component.

If $k=0$, then
\begin{align}
\|t\|_{\T}
&< \frac{\rho_\alpha}{\ell\max B}
< \rho_\alpha,
\end{align}
so the frequency $1$ is positive there. It follows that
\begin{align}
S_\alpha^+(\widetilde B)
&= \left\{t\in\T:
\|t\|_{\T}<\frac{\rho_\alpha}{\ell\max B}\right\}.
\end{align}
Also, $S_\alpha^-(\widetilde B)\subset S_\alpha^-(\ell B)=\varnothing$. Hence $\widetilde B$ is again $\alpha$-isolating.

Starting from $B_0$, define recursively $B_{j+1} := \{1\}\cup\ell B_j.$ Then each $B_j$ is $\alpha$-isolating, $|B_j| = N+j,$ and $\max B_j = N\ell^j.$ Consequently,
\begin{align}
P_\alpha(B_j)
&= \frac{2\rho_\alpha}{N\ell^j}.
\end{align}
For every $n\geq N$, taking $j=n-N$ therefore gives
\begin{align}
F_\alpha(n)
&\leq \frac{2\rho_\alpha\ell^N}{N}\ell^{-n}.
\end{align}
An appropriate choice of $C_\alpha$ takes care of the finitely many  $1\leq n<N$, and so for all $n\geq1$,
\begin{align}
F_\alpha(n)
&\leq C_\alpha
\left\lfloor\rho_\alpha^{-1}\right\rfloor^{-n}.
\end{align}
\end{proof}

\section{The exponential rate}\label{sec:rate}

We complete the proof of Theorem \ref{thm:main} by establishing the following submultiplicative inequality.

\begin{proposition}\label{prop:submultiplicative}
Let $0 \leq \alpha < 1$. For all integers $n,m \geq 1$,
\begin{align}
F_\alpha(n+m)
&\leq F_\alpha(n)F_\alpha(m).
\end{align}
Consequently, the limit exists.
\end{proposition}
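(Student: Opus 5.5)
Given sets $A$ with $|A|=n$ and $B$ with $|B|=m$, I will take the union $A\cup MB$, where $M$ is a very large integer chosen so that $A$ and $MB$ are disjoint. Then $|A\cup MB|=n+m$. The aim is to show that $P_\alpha(A\cup MB)\to P_\alpha(A)P_\alpha(B)$ as $M\to\infty$ through suitable values. Granting this, take the infimum over $A$ and $B$ to get $F_\alpha(n+m)\le F_\alpha(n)F_\alpha(m)$. Fekete's lemma applied to $\log F_\alpha$ then gives existence of $\lim F_\alpha(n)^{1/n}=\inf_n F_\alpha(n)^{1/n}$. This limit is finite and positive, bounded below by $\rho_\alpha$ thanks to Proposition \ref{prop:lower}.

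The key is to decompose $S_\alpha$ into its signed parts. We have
\begin{align}
S_\alpha(A\cup MB)=\bigl(S^+_\alpha(A)\cap M^{-1}S^+_\alpha(B)\bigr)\cup\bigl(S^-_\alpha(A)\cap M^{-1}S^-_\alpha(B)\bigr),
\end{align}
where $M^{-1}S$ denotes $\{t: Mt\in S\}$. The two pieces are disjoint, since $S^+_\alpha(A)$ and $S^-_\alpha(A)$ are disjoint. Hence
\begin{align}
P_\alpha(A\cup MB)=m\bigl(U^+\cap\{t:Mt\in V^+\}\bigr)+m\bigl(U^-\cap\{t:Mt\in V^-\}\bigr),
\end{align}
with $U^\pm=S^\pm_\alpha(A)$ and $V^\pm=S^\pm_\alpha(B)$.

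Each of these sets is a finite union of open arcs: $f$ is continuous and strictly monotone on half-periods, so each condition $f(at)>\alpha$ defines finitely many arcs. The asymptotic independence statement is then
\begin{align}
m\bigl(U\cap\{t:Mt\in V\}\bigr)\to m(U)m(V)\qquad (M\to\infty).
\end{align}
This is the standard weak convergence of $\mathbf 1_V(M\cdot)$ to $m(V)$. One can prove it directly. The set $U$ is a finite union of arcs, so it is a union of at most $O(M)$ full periods $[j/M,(j+1)/M)$, on each of which the measure of $\{Mt\in V\}$ is exactly $m(V)/M$. The remainder consists of $O(1)$ boundary pieces of length at most $1/M$. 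The error is therefore $O(\#\text{arcs}/M)$.

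Applying this to both signs gives
\begin{align}
P_\alpha(A\cup MB)\to m(U^+)m(V^+)+m(U^-)m(V^-).
\end{align}
This is the main obstacle. The limit is not obviously at most $P_\alpha(A)P_\alpha(B)=(m(U^+)+m(U^-))(m(V^+)+m(V^-))$, but it is, since the cross terms are nonnegative. So the inequality holds in the limit.

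The infimum defining $F_\alpha$ need not be attained, so I will argue with $\varepsilon$. Choose $A$ and $B$ with $P_\alpha(A)\le F_\alpha(n)+\varepsilon$ and $P_\alpha(B)\le F_\alpha(m)+\varepsilon$, then take $M$ large. This yields
\begin{align}
F_\alpha(n+m)\le (F_\alpha(n)+\varepsilon)(F_\alpha(m)+\varepsilon)+\varepsilon,
\end{align}
and letting $\varepsilon\to0$ finishes the proof. The cross-term observation is precisely why splitting into signed parts matters; it also makes the argument indifferent to whether $A$ or $B$ contains even frequencies.
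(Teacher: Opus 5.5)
Your proof is correct and follows the paper's argument essentially step for step: the union $A\cup MB$ with $M$ large, the splitting into signed parts, asymptotic independence via intervals of length $1/M$, dropping the nonnegative cross terms, the $\varepsilon$-argument, and Fekete's lemma with positivity from Proposition \ref{prop:lower}. The only slip is your opening claim that the limit is $P_\alpha(A)P_\alpha(B)$; as you later note, it is $m(U^+)m(V^+)+m(U^-)m(V^-)\le P_\alpha(A)P_\alpha(B)$, which is all the argument needs.
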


\begin{proof}
Fix $\varepsilon>0$, and choose sets $A,B\subset\N$ with $|A|=n$ and $|B|=m$ such that
\begin{align}
P_\alpha(A)
&\leq F_\alpha(n)+\varepsilon,\\
P_\alpha(B)
&\leq F_\alpha(m)+\varepsilon.
\end{align}
For an integer $K\geq1$, let $KB=\{Kb:b\in B\}$. If $K>\max A$, then $A$ and $KB$ are disjoint, so $|A\cup KB|=n+m$.

We use the elementary fact that if $E,F\subset\T$ are finite unions of open arcs, then
\begin{align}
m\bigl(E\cap\{t\in\T:Kt\in F\}\bigr)
&\longrightarrow m(E)m(F)
\end{align}
as $K\to\infty$. Indeed, it is enough to check this when $E$ and $F$ are arcs, in which case we can partition the circle into $K$ intervals of length $1/K$, and see that on each complete interval contained in $E$, the pre-image of $F$ occupies measure $m(F)/K$. Only the at most two intervals intersecting the endpoints of $E$ contribute an error of order $1/K$.

Since
\begin{align}
S_\alpha^+(A\cup KB)
&= S_\alpha^+(A)\cap\{t\in\T:Kt\in S_\alpha^+(B)\},
\\
S_\alpha^-(A\cup KB)
&= S_\alpha^-(A)\cap\{t\in\T:Kt\in S_\alpha^-(B)\},
\end{align}
we obtain
\begin{align}
P_\alpha(A\cup KB)
&\longrightarrow m(S_\alpha^+(A))m(S_\alpha^+(B))
        + m(S_\alpha^-(A))m(S_\alpha^-(B))\\
&\leq P_\alpha(A)P_\alpha(B).
\end{align}
Therefore, for all sufficiently large $K$,
\begin{align}
F_\alpha(n+m)
&\leq P_\alpha(A\cup KB) \\
&\leq P_\alpha(A)P_\alpha(B)+\varepsilon \\
&\leq \bigl(F_\alpha(n)+\varepsilon\bigr)
        \bigl(F_\alpha(m)+\varepsilon\bigr)
        +\varepsilon.
\end{align}
Letting $\varepsilon\to0$ gives $F_\alpha(n+m) \leq F_\alpha(n)F_\alpha(m).$ Moreover, by Proposition \ref{prop:lower}, $F_\alpha(n)>0$ for every $n$. Hence $\log F_\alpha(n)$ is subadditive, and Fekete's lemma shows that
\begin{align}
\lim_{n\to\infty}\frac{1}{n}\log F_\alpha(n)
&= \inf_{n\geq1}\frac{1}{n}\log F_\alpha(n).
\end{align}
Exponentiating proves that the limit exists.
\end{proof}

\begin{proof}[Proof of Theorem \ref{thm:main}]
Proposition \ref{prop:lower} gives the lower bound, while Propositions \ref{prop:upper-integer} and \ref{prop:upper-noninteger} give the upper bounds in parts \ref{pt:main-integer} and \ref{pt:main-noninteger}, respectively. Proposition \ref{prop:submultiplicative} shows that the limit of the exponential rate exists.

If $\rho_\alpha^{-1}\in\N$, taking $n$th roots in the lower and upper bounds gives $\rho_\alpha \leq \lim_{n\to\infty}F_\alpha(n)^{1/n} \leq \rho_\alpha,$ and hence $\lim_{n\to\infty}F_\alpha(n)^{1/n} = \rho_\alpha$. If $\rho_\alpha^{-1}\notin\N$, the same argument  produces the desired inequality.
\end{proof}

\section{The remaining loss and open questions}\label{sec:limitations}

\begin{proposition}\label{prop:loss}
Suppose that $0 \leq \alpha < 1$ and $\rho_\alpha^{-1}\in\N$, and let $B_j$ be the maximal sequence constructed in the proof of Proposition \ref{prop:upper-integer}. Then
\begin{align}
P_\alpha(B_j)
&\simeq 
|B_j|^{\log_2((1-\rho_\alpha)/(1-2\rho_\alpha))}
\rho_\alpha^{|B_j|}
\qquad (j\to\infty),
\end{align}
where the implicit constants depend only on $\alpha$.
\end{proposition}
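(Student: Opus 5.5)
We track exact quantities along the maximal iteration. Since each $B_j$ is $\alpha$-isolating, $P_\alpha(B_j)=2\rho_\alpha/N_j$ exactly. Write $n_j=|B_j|$ and $\ell=\rho_\alpha^{-1}$. The recursion $B_{j+1}=D_{d_j}\cup q_{d_j}B_j$ gives $n_{j+1}=n_j+d_j$ and $N_{j+1}=N_jq_{d_j}$. The proof of Proposition \ref{prop:upper-integer} gives the exact identity
\begin{align}
\rho_\alpha^{-n_{j+1}}P_\alpha(B_{j+1})
&= \rho_\alpha^{-n_j}P_\alpha(B_j)\,\frac{\ell-1}{\ell-2+\ell^{-d_j}} .
\end{align}
Iterating yields
\begin{align}
\rho_\alpha^{-n_j}P_\alpha(B_j)
&= \frac{2\ell^2}{3}\prod_{i<j}\frac{\ell-1}{\ell-2}\bigl(1+\tfrac{\ell^{-d_i}}{\ell-2}\bigr)^{-1}.
\end{align}
So the statement reduces to two claims. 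First, $\prod_i(1+\ell^{-d_i}/(\ell-2))^{-1}$ converges to a positive constant. Second, $((\ell-1)/(\ell-2))^{j}\simeq n_j^{\log_2((\ell-1)/(\ell-2))}$, which is equivalent to $n_j\simeq 2^j$.

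Both claims follow once $d_j$ is known to grow geometrically. By maximality, $q_{d_j-1}\le N_j<q_{d_j}$. Since $q_d$ is comparable to $\ell^d$ with constants depending only on $\ell$, we get $d_j=\log_\ell N_j+O(1)$. Next, $\log_\ell N_{j+1}=\log_\ell N_j+\log_\ell q_{d_j}=\log_\ell N_j+d_j+O(1)$, so $x_j:=\log_\ell N_j$ satisfies $x_{j+1}=2x_j+O(1)$ with a uniform $O(1)$. Hence $x_j=c2^j+O(1)$ for some constant $c>0$. Indeed, $x_j2^{-j}$ is Cauchy with error $O(2^{-j})$, and $c\ge\log_\ell 3>0$ by the bound $x_j\ge 2^j\log_\ell 3$ already proved. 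This gives $d_j=c2^j+O(1)$, so $\sum_i\ell^{-d_i}<\infty$ and the first claim holds.

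For the second claim, $n_j=3+\sum_{i<j}d_i=c2^j+O(j)$, so $n_j\simeq 2^j$. In fact $n_j/2^j\to c$. Therefore
\begin{align}
\bigl(\tfrac{\ell-1}{\ell-2}\bigr)^{j}
&=2^{j\log_2((\ell-1)/(\ell-2))}\simeq n_j^{\log_2((\ell-1)/(\ell-2))},
\end{align}
with constants depending only on $\alpha$. Combining this with the product formula proves both the upper and lower implicit bounds.

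The main obstacle is the uniform $O(1)$ control in $x_{j+1}=2x_j+O(1)$. We need explicit two-sided bounds $\kappa_1\ell^d\le q_d\le\ell^d$, with $\kappa_1=(\ell-2)/(\ell-1)$, read off from the closed formula for $q_d$. We also need $d_j\ge1$ always, which holds because $q_0=1\le N_j$. Everything else is bookkeeping.
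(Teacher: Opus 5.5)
Your proposal is correct and follows the paper's proof. It uses the same exact product formula $\rho_\alpha^{-|B_j|}P_\alpha(B_j)=\frac{2\ell^2}{3}\prod_{i<j}\frac{\ell-1}{\ell-2}\bigl(1+\frac{\ell^{-d_i}}{\ell-2}\bigr)^{-1}$, gets convergence of the correction product from the geometric growth of $d_j$, and shows $|B_j|\simeq 2^j$. The only differences are cosmetic: the paper gets $2L_j<L_{j+1}<2L_j+1$ for $L_j=\log_\ell N_j$ (your $x_j$) directly from $N_j<q_{d_j}\le \ell N_j-1$, and reads off $|B_j|=L_j+O(j)$ from the logarithm of the product formula, whereas you sum $d_i=c2^i+O(1)$, which even gives $|B_j|/2^j\to c$.
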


\begin{proof}
Set $\ell=\rho_\alpha^{-1}$, $N_j=\max B_j$, and let $d_j$ denote the size of the $j$th construction step. By maximality, $q_{d_j-1} \leq N_j < q_{d_j}.$
Since $q_d=\ell q_{d-1}-1$, we have $N_j < q_{d_j} \leq \ell N_j-1.$ Thus, writing $L_j=\log_\ell N_j$, $2L_j < L_{j+1} < 2L_j+1,$ and consequently $L_j\simeq 2^j$. Moreover, $q_{d_j}>N_j$ and $q_{d_j}<\ell^{d_j}$ imply $d_j>L_j$.

The exact loss at the $j$th step is
\begin{align}
\frac{\ell^{d_j}}{q_{d_j}}
&= \frac{\ell-1}{\ell-2}
\left(1+\frac{\ell^{-d_j}}{\ell-2}\right)^{-1}.
\end{align}
Therefore
\begin{align}
\rho_\alpha^{-|B_j|}P_\alpha(B_j)
&= \frac{2\ell^2}{3}
\left(\frac{\ell-1}{\ell-2}\right)^j
\prod_{i=0}^{j-1}
\left(1+\frac{\ell^{-d_i}}{\ell-2}\right)^{-1}.
\end{align}
Since $d_i>L_i\gtrsim2^i$, the sum $\sum_i\ell^{-d_i}$ converges, and hence the product converges to a positive limit. It follows that
\begin{align}
\rho_\alpha^{-|B_j|}P_\alpha(B_j)
&\simeq 
\left(\frac{\ell-1}{\ell-2}\right)^j.
\end{align}

On the other hand,
\begin{align}
\log_\ell\left(\rho_\alpha^{-|B_j|}P_\alpha(B_j)\right)
&= |B_j|-L_j+\log_\ell\left(\frac{2}{\ell}\right),
\end{align}
so $|B_j|=L_j+O(j)$. Hence $|B_j|\simeq 2^j$, and therefore
\begin{align}
\left(\frac{\ell-1}{\ell-2}\right)^j
&\simeq 
|B_j|^{\log_2((\ell-1)/(\ell-2))}.
\end{align}
Since
$\log_2((\ell-1)/(\ell-2))
=
\log_2((1-\rho_\alpha)/(1-2\rho_\alpha))$,
the result follows.
\end{proof}

This leaves open whether the polynomial loss is a feature of the problem itself or only of the construction.

\begin{question}\label{q:constant}
Suppose that $0 \leq \alpha < 1$ and $\rho_\alpha^{-1}\in\N$. Is there a constant $C_\alpha>0$ such that $F_\alpha(n) \leq C_\alpha\rho_\alpha^n$ for every integer $n\geq1$?
\end{question}

Theorem \ref{thm:main-cover} also shows that a strict cyclic cover with $d$ multipliers cannot have modulus $q$ satisfying $q\rho_\alpha^d\to1$: the largest possible ratio tends to
\begin{align}
\frac{1-2\rho_\alpha}{1-\rho_\alpha}
&< 1.
\end{align}
Thus this particular modification of the covering step cannot remove the accumulated loss. Neither this restriction nor Proposition \ref{prop:loss}, however, gives a lower bound forcing $\rho_\alpha^{-n}F_\alpha(n)$ to diverge.

When $\rho_\alpha^{-1}\notin\N$, the lower and upper exponential bounds in Theorem \ref{thm:main} do not in general coincide. Determining the exact value of the limit in this case remains beyond the present argument.

\setlength{\parindent}{0pt}

\end{document}